\newtheorem{teo}{Theorem}[section]
\newtheorem{df}[teo]{Definition}
\newtheorem{pro}[teo]{Proposition}
\newtheorem{cor}[teo]{Corollary}
\newtheorem{lem}[teo]{Lemma}
\newtheorem{ex}[teo]{Examples}
\long\def\symbolfootnote[#1]#2{\begingroup
\def\thefootnote{\fnsymbol{footnote}}\footnote[#1]{#2}\endgroup} 
\title{The leaves of the Fatou set accumulate on the leaves of the Julia set.}
\author{Nicolas Hussenot Desenonges}
\begin{document}
\maketitle

\begin{abstract}
In $2001$, E. Ghys, X. G\'{o}mez-Mont and J. Saludes defined in \cite{GGS} the Fatou and Julia components of transversely holomorphic foliations on compact manifolds. It is a partition of the manifold in two saturated sets: the Fatou set which is open and represents the non-chaotic part of the foliation and its complementary set, the Julia set. Using the Brownian motion transverse to the leaves, it is proved that, if the foliation is taut and if $F$ is a wandering component of the Fatou set, then almost every point of the topological boundary $\partial{F}$ (almost for any harmonic measure on $\partial{F}$) is a limit point of each leaf of $F$.
\end{abstract}

\section{Introduction}

The theory of conformal dynamical systems on the Riemann sphere $\mathbb{C}\mathbb{P}^1$ consists in two main subjects of study: the dynamics of Kleinian groups and the dynamics of iteration of a rational map on the sphere. Each of these theories leads to a partition of $\mathbb{C}\mathbb{P}^1$ in two invariant sets: \textit{domain of discontinuity/limit set} for Kleinian groups and \textit{Fatou set/Julia set} for iteration of a rational map. The famous Sullivan's dictionary brings light  links between these two theories. The density of orbits of the Julia set (resp. limit set) is one of the first analogies. More precisely, we have:
\begin{enumerate}
\item if $\Gamma$ is a non elementary Kleinian group, then for all $x\in\mathbb{C}\mathbb{P}^1$, the orbit $\Gamma.x$ accumulates on every point of the limit set (see for example \cite{Ma}).
\item if $f$ is a rational map of degree two or more, then for any $z\in Julia(f)$, the set of iterated pre-images of $f$ is dense in $Julia(f)$ (see for example \cite[Theorem3]{Mi}).
\end{enumerate}

In \cite{GGS}, Etienne Ghys, Xavier G\'{o}mez-Mont and Jordi Saludes consider a compact manifold with a transversely holomorphic foliation and develop a theory analoguous to the above mentioned theories. More precisely, they manage to define dynamically a partition of the manifold in two saturated sets called Julia set and Fatou set.
By analogy with the properties cited above concerning Kleinian groups and iteration of rational maps, the following questions arise naturally: If $L$ is a leaf contained in a connected component $J$ of the Julia set, do we have $\overline{L}=J$? Does the Julia set play the role of an attractor for the leaves of the  Fatou set? Using probabilistic tools (Brownian motion), we answer positively to the second question. More precisely, we prove that if the foliation is taut and if $F$ is a wandering component of the Fatou set, then every leaf of $F$ accumulates on almost every point of the topological boundary $\partial F\subset Julia$. The term almost in the previous sentence is with respect to the harmonic measure on $\partial F$ (i.e. the exit measure of $F$ for a Brownian motion starting at an arbitrary point in $F$).

The Brownian motion has already been used in order to study the dynamics of the leaves of foliations: this is Lucy Garnett's theory of harmonic measures \cite{Ga}. B. Deroin and V. Kleptsin used this theory in the particular case of transversely holomorphic foliations \cite{DK}. They proved that the following dichotomy holds: either there exists an invariant transverse measure or there exists a finite number of minimal sets $\mathcal{M}_1,...,\mathcal{M}_k$ equipped with probability measures $\nu_1,...,\nu_k$ such that for every point $x$ in the manifold and almost every leafwise Brownian path $\omega$ starting at $x$, the path $\omega$ tends to one of the $\mathcal{M}_j$ and is distributed with respect to $\nu_j$ in the sense that:
$$\lim\limits_{t\to \infty}\frac{1}{t}\omega_{*}leb_{[0,t]}=\nu_j,$$
where $leb_{[0,t]}$ is the Lebesgue measure on the interval [0,t].

In Garnett's theory, the Brownian motion is tangent to the leaves of the foliation. In this paper, we use a totally different approach of the Brownian motion in the sense that the Brownian motion is essantially transverse to the foliation.

\paragraph{Fatou and Julia components of transversely holomorphic foliations. }
Let $M$ be a compact, connected manifold of real dimension $d+2$ endowed with a transversely holomorphic foliation $\mathcal{F}$. We can define a partition of the manifold in two saturated sets: the Fatou set is defined as the set of points $x$ in $M$ such that there exists a basic normal vector field (i.e. a section of the normal bundle to the foliation constant along the leaves) which does not vanish at $x$ (we will come back later to the demanded regularity for the vector field). The Fatou set is then an open set. Its complementary set is a closed set called Julia set. Given a connected component  $F$ of the Fatou set, it is easy to prove (integrating the basic normal vector fields), that the group of homeomorphisms of $M$ which preserve the foliation acts transitively on $F$. Using this homogeneity property and by analogy with Molino's theory \cite{Mo}, Ghys, G\'{o}mez-Mont and Saludes proved that there are only three exclusive cases for a connected component $F$ of the Fatou set (cf theorem \ref{theoGGS1}): $F$ is a wandering component (i.e. all the leaves of $\mathcal{F}_{|F}$ are closed in $F$) or $F$ is a dense component (all the leaves of $\mathcal{F}_{|F}$ are dense in $F$) or $F$ is a semi-wandering component (the closure of the leaves of $\mathcal{F}_{|F}$ form a real codimension $1$ foliation of $F$). 
In \cite{GGS}, the authors give a complete description of the Fatou set studying in details each of the three previous cases. In particular, they prove that if $F$ is a wandering component of the Fatou set, then the leaf space $F/\mathcal{F}_{|F}$ is a Hausdorff Riemann surface of finite type (it is the analoguous of Ahlfors' finiteness theorem \cite{Ah}).

Let us mention that an alternative definition of the Fatou and Julia sets of transversely holomorphic foliations has been given by T. Asuke in \cite{As}.

\paragraph {Statement of the theorem. }
Let $M$ be a compact manifold endowed with a transversely holomorphic foliation $\mathcal{F}$. Endow $M$ with a complete Riemannian metric $g$. Then, we can define the Brownian motion on $(M,g)$: it is the diffusion process $(B_t)_{t\geq 0}$ associated to the Laplace-Beltrami operator on $(M,g)$. It is defined on the family of probability spaces $(\Omega_x,\mathbb{P}_x)_{x\in M}$.
Let $F$ be a wandering connected component of the Fatou set. We define the family of harmonic measures $(\nu_x)_{x\in F}$ by:
$$\nu_x(A)=\mathbb{P}_x(B_T\in A),$$
where $T=\inf\{t\in[0;+\infty] \text{ s.t. } B_t\in \partial F\}$ is the hitting time of $\partial F$ and $A$ is any Borel set in $\partial F$. The manifold $M$ being compact, if we make another choice for the metric or another choice for the point $x$, we get a new measure on $\partial F$ which is equivalent to the previous one. So, this makes sense to talk about the class $\nu$ of harmonic measures on $\partial F$.
We would like to push forward the Brownian motion $(B_t)_{t\in[0;T[}$ in $F$ on the leaf space $F/\mathcal{F}_{|F}$ and get a time-changed Brownian motion. For doing this, we will need the following assumption on the manifold $M$.

\begin{df}
A foliated manifold is said to be taut if there exists a metric on $M$ for which all the leaves are minimal submanifolds.
\end{df}
 
The main theorem of this paper is the following:

\begin{teo}\label{theoperso1}
Let $M$ be a compact, connected manifold endowed with a transversely holomorphic foliation $\mathcal{F}$. Suppose that $\mathcal{F}$ is taut.
Let $F$ be a wandering component of the Fatou set. Assume that $\nu(\partial F)=1$ (i.e. almost every Brownian path starting in $F$ hits the Julia set in finite time). Then, each leaf of $F$ accumulates on $\nu$-almost every point of $\partial F$.
\end{teo}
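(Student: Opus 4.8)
The plan is to push the transverse Brownian motion down to the leaf space and read the conclusion off there. Since $F$ is a wandering component, the leaf space $\Sigma:=F/\mathcal{F}_{|F}$ is (by the Ghys--G\'omez-Mont--Saludes structure theorem) a Hausdorff Riemann surface of finite type, and the quotient map $\pi\colon F\to\Sigma$ is a holomorphic submersion whose fibres are the closed leaves. Tautness enters exactly here: choosing a metric on $M$ for which the leaves are minimal submanifolds, the Laplace--Beltrami operator applied to a function lifted from $\Sigma$ loses its mean-curvature drift term; and since the transverse dimension is complex one, the induced transverse metric is automatically conformal and the only surviving term vanishes on a holomorphic coordinate. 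Hence $Z_t:=\pi(B_t)$ is, for $t<T$, a conformal local martingale on $\Sigma$, so by the It\^o / Dambis--Dubins--Schwarz description of conformal martingales $Z_t=W_{\sigma_t}$ for a Brownian motion $W$ on $\Sigma$ and a clock $\sigma_t=\int_0^t\varphi(B_s)\,ds$ with $\varphi\ge 0$. Compactness of $M$ bounds $\varphi$, so $\sigma_T\le\|\varphi\|_\infty\,T<\infty$ almost surely — this is where the hypothesis $\nu(\partial F)=1$, i.e. $T<\infty$ a.s., is used; choosing $\Sigma$ complete so that $W$ does not explode, $Z_t$ converges as $t\to T$ to a point $Z_\infty\in\Sigma$, $\mathbb{P}_x$-almost surely.

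Next I would record the elementary topological link between this convergence and leaf accumulation: if $\pi(B_t)\to q$ then $B_T=\lim_{t\to T}B_t$ lies in $\overline{L_q}$, where $L_q=\pi^{-1}(q)$ is the leaf over $q$, since $B_t$ eventually lies in every tubular neighbourhood $\pi^{-1}(D)$ of $L_q$ and these shrink to $\overline{L_q}$ as $D\downarrow\{q\}$. Hence $B_T\in\overline{L_{Z_\infty}}$ a.s., so $\nu$ is carried by $\bigcup_{q\in\Sigma}(\overline{L_q}\cap\partial F)$; by lower semicontinuity of $q\mapsto\overline{L_q}$ one may replace $\Sigma$ here by a countable dense set $Q$, giving $\nu\big(\bigcup_{q\in Q}\overline{L_q}\big)=1$. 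Now bring in the homogeneity of $F$: the flows of basic (leafwise-constant) normal vector fields act transitively on $F$, hence on the set of leaves of $\mathcal{F}_{|F}$, and, being sufficiently regular diffeomorphisms of the compact $M$ that preserve $\mathcal{F}$, they preserve $\partial F$ and the harmonic-measure class. So the sets $\overline{L_q}\cap\partial F$ are carried onto one another by class-preserving homeomorphisms, whence $\nu(\overline{L_q}\cap\partial F)$ has the same value for every leaf; if that common value were $0$, countable additivity would contradict $\nu\big(\bigcup_{q\in Q}\overline{L_q}\big)=1$, so it equals $1$, which is precisely the assertion of the theorem (using also that two choices of metric or basepoint give equivalent harmonic measures).

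The main obstacle is the step asserting that $\nu(\overline{L_q}\cap\partial F)$ is $\{0,1\}$-valued, equivalently the same for every leaf. Making the homogeneity argument carry this weight requires the flows of basic normal vector fields to be genuinely class-preserving — this is where the ``demanded regularity of the vector field'' mentioned in the excerpt must be pinned down, and one must check that the time-changed leafwise motion does not corrupt the harmonic-measure class. The natural alternative I would pursue is a direct $0$--$1$ law: the function $x\mapsto\mathbb{P}_x(B_T\in\overline{L_q})$ is bounded and $\Delta$-harmonic on $F$, and one shows it is constant, either by a L\'evy-type asymptotic-triviality argument for the tail of $B$ before time $T$ (using recurrence of $B$ on the compact $M$ together with the topological link of the second paragraph applied along shifted paths), or by transferring the question to $\Sigma$ and exploiting that a finite-type Riemann surface supports nonconstant bounded harmonic functions only through its funnel ends, whose escape must then be shown incompatible with $\nu(\partial F)=1$. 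Establishing this dichotomy, and its interaction with the standing hypothesis $\nu(\partial F)=1$, is where I expect the real work to lie; the rest of the argument is essentially bookkeeping around the projection $\pi$ and the convergence of $Z_t$.
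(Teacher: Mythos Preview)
Your argument breaks at the first substantive step after the projection: you assert that compactness of $M$ bounds the conformal factor $\varphi=\lambda^2$, hence $\sigma_T\le\|\varphi\|_\infty T<\infty$ a.s., hence $Z_t=\pi(B_t)$ converges in $\Sigma$ as $t\uparrow T$. Exactly the opposite is true, and the whole proof turns on it. The dilatation $\lambda$ is attached to the map $\pi\colon(F,g_{|F})\to(\Sigma,h)$, which is defined only on the open set $F$; compactness of $M$ gives no control near $\partial F$, and in the Kleinian model this is the familiar blow-up of the hyperbolic density of $\Omega(\Gamma)$ at the limit set. More decisively, the paper proves a purely topological lemma: for \emph{every} continuous path $\gamma$ in $F$ that reaches $\partial F$ at time $t_0$, the projection $\pi\circ\gamma$ has \emph{no} limit in $\Sigma$ as $t\to t_0$ (one thickens the putative limit leaf by the flow of a basic normal field and traps $\gamma(t_0)$ inside a closed plaque-box contained in $F$, a contradiction). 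Since $W$ on the complete surface $\Sigma$ does not explode, $W_{\sigma_t}$ would converge if $\sigma_T<\infty$; the lemma therefore forces $\sigma_T=\infty$ almost surely.

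With $\sigma_T=\infty$ your limit point $Z_\infty$ does not exist, and everything downstream (the inclusion $B_T\in\overline{L_{Z_\infty}}$, the countable cover by leaf closures, the homogeneity $0$--$1$ dichotomy you flag as the ``main obstacle'') collapses. The correct mechanism is the one you bypassed: because $\Sigma$ is of finite type, Brownian motion on $\Sigma$ is \emph{recurrent}, so for every leaf $L_x$ and every neighbourhood $U_x$ the saturation $\pi^{-1}(\pi(U_x))=\mathrm{sat}(U_x)$ is visited by $B_t$ at times $t_n\uparrow T$. For $y\in\mathrm{supp}(\nu)$ and any neighbourhood $V_y$, the event $\{B_T\in V_y\}$ has positive probability and on it $B_t\in V_y$ for $t$ near $T$; intersecting gives $\mathrm{sat}(U_x)\cap V_y\neq\varnothing$. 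A short homogeneity argument (push $x_n\to x$ to $x$ by flows of basic normal fields; these vanish on the Julia set, so the companion points $y_n\in L_{x_n}$ are moved negligibly and still converge to $y$) then yields $y\in\overline{L_x}$. No $0$--$1$ law is needed; that obstacle is an artefact of having the sign of $\sigma_T$ wrong.
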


\textbf{Remark. }The tautness hypothesis is essantial in our proof but the theorem may still be true without this hypothesis. Nevertheless, this hypothesis is not so strong: for example a foliation without invariant transverse measure is taut (see \cite{S2} or \cite{CC1} for more details about taut foliations).

\paragraph{Analogy with Kleinian groups.} Let $\Sigma$ be a compact Riemann surface and $\rho:\pi_1(\Sigma)\rightarrow PSL(2,\mathbb{C})$ be a morphism from the fondamental group of $\Sigma$ to the group of biholomorphisms of the Riemann sphere $\mathbb{C}\mathbb{P}^1$. Suspending the representation $\rho$, one gets a compact manifold $M_{\rho}$ which fibers over $\Sigma$ (fibers are copies of $\mathbb{C}\mathbb{P}^1$) and which is endowed with a transversely holomorphic foliation. The dynamics of this foliation correspond to the dynamics of the action of the monodromy group $\Gamma:=\rho(\pi_1(\Sigma))$ on a fiber. Basic normal vector fields on $M_{\rho}$ correspond to $\Gamma$-invariant vector fields in the fiber.

From now on, assume $\Gamma$ is a Kleinian group (i.e. a discret subgroup of $PSL(2,\mathbb{C})$). Then we have the classical partition: $\mathbb{C}\mathbb{P}^1=\Omega(\Gamma)\cup\Lambda(\Gamma)$. By definition the domain of discontinity $\Omega(\Gamma)$ is the set of points $z\in\mathbb{C}\mathbb{P}^1$ such that there exists an open set $U$ containing $z$ and satisfying the following: $\gamma U\cap U=\varnothing$ for all but a finite number of $\gamma\in\Gamma$. And the limit set $\Lambda(\Gamma)$ is the complementary set of $\Omega(\Gamma)$. Assume for simplicity that $\Gamma$ is torsion free and $\Omega(\Gamma)\neq\varnothing$. As it is explained in \cite[example~8.4]{GGS}, the Julia set (resp. the Fatou set) of the suspended foliation corresponds to the saturated set of $\Lambda(\Gamma)$ (resp. $\Omega(\Gamma)$).
 
If $\Gamma$ is non elementary (i.e. if the limit set contains strictly more than two points), a classical property of the theory of Kleinian groups is the following: every point of the domain of discontinuity accumulates on every point of the limit set. The proof of this fact is very easy (see for example \cite{Ma}). We are going to give an idea of an alternative proof of this fact using conformal invariance of Brownian motion because it is this idea that will be generalized in the proof of the main theorem \ref{theoperso1}. Let $\Gamma$ be a Kleinian group, torsion free, non elementary and with $\Omega(\Gamma)\neq\varnothing$. Endow $\mathbb{C}\mathbb{P}^1$ with its spherical metric. Consider a Brownian motion $B_t$ in $\mathbb{C}\mathbb{P}^1$ starting at a point $x\in\Omega(\Gamma)$. Let $T:=\inf\{t\geq 0\text{ s.t. } B_t\in\Lambda(\Gamma)\}$. According to a theorem of Myrberg \cite{My} (see also \cite{Do}), the limit set of a non elementary Kleinian group has positive logarithmic capacity. So, almost surely $T<\infty$. So, the harmonic measure $\nu_x$ on $\Lambda(\Gamma)$ defined by $\nu_x(A)=\mathbb{P}_x(B_T\in A)$ is a probabilty measure with $supp(\nu_x)=\Lambda(\Gamma)$. The natural projection $p:\Omega(\Gamma)\rightarrow \Omega(\Gamma)/\Gamma$ is holomorphic. So, according to the celebrated Paul Lévy's theorem, the processus $p(B_t)_{t\in[0;T[}$ is a changed-time Brownian motion. This means that there is a time reparametrization $\sigma$ (i.e. $\sigma$ is a random strictly increasing map from $[0;T[$ to $\mathbb{R}^+$) such that the process $\left\{p(B_{\sigma^{-1}(s)}),s\in[0,\lim\limits_{t\to T}\sigma(t)[\right\}$ is a Brownian motion starting at $p(x)$. An easy topological argument (which will be generalized to our context in the proof of the main theorem) shows that necessarely $\lim\limits_{t\to T}\sigma(t)=\infty$. The Ahlfors' finitness theorem asserts that the quotient $\Omega(\Gamma)/\Gamma$ is a Riemann surface of finite type (i.e. a compact Riemann surface with a finite number of points deleted) \cite{Ah}. In particular, the Brownian motion is recurrent on $\Omega(\Gamma)/\Gamma$. So, if we take an arbitrary point $y$ in $\Omega(\Gamma)$ and $U_y$ an arbitrarily small open neighborhood of $y$, there exists arbitrarily large times $s$ such that $p(B_{\sigma^{-1}(s)})$ belongs to $p(U_y)$. Pushing backward by $p$, we deduce that there exist times $t$ arbitrarily close to $T$ such that $B_t$ belongs to $p^{-1}(p(U_y))=\Gamma.U_y$. So we have just proved that $\Gamma.U_y$ accumulates on $\nu_x$-almost every point of $\Lambda(\Gamma)$. The point $y$ and neighborhood $U_y$ being arbitrarily choosen, and $supp(\nu_x)$ being equal to the entire $\Lambda(\Gamma)$, one concludes (with a little work) that every point of the domain of discontinuity accumulates on every point of the limit set.

\paragraph{Questions. } The following questions are not answered in this paper:
\begin{enumerate} 
\item Is the class of harmonic measures non zero? In other words, does a Brownian motion starting at a point $x\in Fatou$ visits the Julia set? If not, our theorem does not say anything. Nevertheless, by analogy with the result of Makarov which asserts that the limit set of a non elementary Kleinian group is visited by Brownian motion, we conjecture that in presence of sufficiently rich dynamics for the foliation, the Julia set is visited by the Brownian motion. 
\item If the harmonic measure in $\partial F$ is non zero, what can be said about the support of the measure? Is it the full $\partial F$? What are the dynamics inside the Julia set? 
\end{enumerate}

\paragraph {Acknowledgements}
This paper comes from the first part of my Phd thesis \cite{Hu}. I am very grateful to my advisor Ga\"{e}l Meigniez for his precious help during all these years.

\section{Fatou and Julia components.}

In this first section, we are going to define the Fatou and Julia components of a transversely holomorphic foliation following the presentation of  \cite{GGS}. 

Let $M$ be a compact, connected manifold of real dimension $d+2$ endowed with a transversely holomorphic foliation $\mathcal{F}$. Such a foliation may be defined by an atlas $(U_i,\varphi_i,\gamma_{ij})$. The $U_i$ are open sets in $M$ covering $M$. The maps $\varphi_i:U_i\rightarrow \mathbb{C}$ are submersions with connected fibres and the maps $\gamma_{ij}:\varphi_j(U_i\cap U_j)\rightarrow \varphi_i(U_i\cap U_j)$ are biholomorphisms and satisfy $\varphi_i=\gamma_{ij}\circ\varphi_j$. 

Look at $(M,\mathcal{F})$ as a foliation of complex codimension $1$. Denote $TM$ the tangent bundle, $T\mathcal{F}$ the subbundle of $TM$ consisting of those vectors which are tangent to the leaves and $\nu^{1,0}$ the quotient bundle $TM/T\mathcal{F}$. One has the following exact sequence:
\begin{equation}\label{eq1}
0\longrightarrow T\mathcal{F}\longrightarrow TM \longrightarrow \nu^{1,0}\longrightarrow 0
\end{equation}

Let $E$ be a vector bundle over $M$.
We say that the germ of a section $X$ of $E$ at a point $x$ of $M$ has modulus of continuity $\epsilon \log\epsilon$ if there is a positive constant $C$, a coordinate chart $U$ containing $x$, and a trivialization of the bundle over $U$ such that for $x_1$, $x_2$ in $U$, we have:
\[| X(x_2)-X(x_1)| <-C| x_1-x_2| \log| x_1-x_2|\]
Denote $\mathcal{C}^{\epsilon \log\epsilon}(E)$ the sheaf of sections of the vector bundle $E$ which have a modulus of continuity $\epsilon \log\epsilon$. The sheaves of sections of (\ref{eq1}) of modulus of continuity $\epsilon \log\epsilon$ give rise to the following exact sequence of fine sheaves:
\[0\longrightarrow \mathcal{C}^{\epsilon \log\epsilon}(T\mathcal{F})\longrightarrow \mathcal{C}^{\epsilon \log\epsilon}(TM)\longrightarrow \mathcal{C}^{\epsilon \log\epsilon}(\nu^{1,0})\longrightarrow 0\]

The normal bundle $\nu^{1,0}$ is flat along the leaves. Indeed, it can be defined by the cocycle: $U_i\cap U_j\rightarrow \mathbb{C}^*$, $p\mapsto \gamma_{ji}'(\varphi_i(p))$. These maps are constant along the fibres of $\varphi_i$ so they are constant along the leaves. Any time that we have a bundle which is flat along the leaves, we can define the sections of this bundle which are constant along the leaves. We call these sections basic sections.

Likely as $\nu^{1,0}$, the bundle $\nu^{1,0}\otimes\nu^{0,1*}$ is flat along the leaves. Denote $L_{\mathcal{F}}^{\infty}(\nu^{1,0}\otimes\nu^{0,1*})$ the sheave of basic sections of this bundle which are essentialy bounded. Denote also $\mathcal{C}_{\mathcal{F}}(\nu^{1,0})$ the sheaf of continuous basic sections of the bundle $\nu^{1,0}$ satisfying: $\forall \sigma \in \mathcal{C}_{\mathcal{F}}(\nu^{1,0})$, $\overline{\partial}\sigma \in L_{\mathcal{F}}^{\infty}(\nu^{1,0}\otimes\nu^{0,1*})$.  Denote $\mathcal{C}_{\mathcal{F}}^{\epsilon \log\epsilon}(TM):=\pi^{-1}(\mathcal{C}_{\mathcal{F}}(\nu^{1,0}))$ (where $\pi$ is the projection  $\pi:\mathcal{C}^{\epsilon \log\epsilon}(TM)\longrightarrow \mathcal{C}^{\epsilon \log\epsilon}(\nu^{1,0})$). We have the following exact sequence:
\[0\longrightarrow \mathcal{C}^{\epsilon \log\epsilon}(T\mathcal{F})\longrightarrow \mathcal{C}_{\mathcal{F}}^{\epsilon \log\epsilon}(TM)\longrightarrow \mathcal{C}_{\mathcal{F}}(\nu^{1,0})\longrightarrow0\]

The first sheaf is a fine one. So, $H^1(M,\mathcal{C}^{\epsilon \log\epsilon}(T\mathcal{F}))=0$, which gives rise to the following exact sequence of global sections:

\[0\rightarrow H^0(M,\mathcal{C}^{\epsilon \log\epsilon}(T\mathcal{F}))\rightarrow H^0(M,\mathcal{C}_{\mathcal{F}}^{\epsilon \log\epsilon}(TM))\rightarrow H^0(M,\mathcal{C}_{\mathcal{F}}(\nu^{1,0}))\rightarrow0\]

This implies that we can lift any basic normal vector field $X \in H^0(M,\mathcal{C}_{\mathcal{F}}(\nu^{1,0}))$ to a vector field in $M$ with modulus of continuity $\epsilon \log\epsilon$. Such vector fields have the property to be uniquely integrable in the sense that the equation $x'=X(x)$ has a unique solution for a given initial condition, and so defines a local flow \cite{R}.

Summarizing, we have the following:

\begin{lem}\cite{GGS}
\begin{enumerate}
\item Any basic normal vector field $X \in H^0(M,\mathcal{C}_{\mathcal{F}}(\nu^{1,0}))$ can be lifted to a vector field in $H^0(M,\mathcal{C}_{\mathcal{F}}^{\epsilon \log\epsilon}(TM))$.
\item Any vector field $X \in H^0(M,\mathcal{C}_{\mathcal{F}}^{\epsilon \log\epsilon}(TM))$ gives rise to a global 1-parameter flow $\phi:M\times \mathbb{R}\rightarrow M$ preserving the foliation.
\end{enumerate}
\end{lem}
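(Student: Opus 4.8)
The statement splits into an existence-and-regularity claim (the lifting, part 1) and a dynamical claim (the integration, part 2), and I would treat them separately.

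\emph{Part 1.} The plan is to read off surjectivity on global sections from the short exact sequence of sheaves
\[0\longrightarrow \mathcal{C}^{\epsilon \log\epsilon}(T\mathcal{F})\longrightarrow \mathcal{C}_{\mathcal{F}}^{\epsilon \log\epsilon}(TM)\overset{\pi}{\longrightarrow} \mathcal{C}_{\mathcal{F}}(\nu^{1,0})\longrightarrow0\]
together with the vanishing $H^1(M,\mathcal{C}^{\epsilon \log\epsilon}(T\mathcal{F}))=0$. Two points need verification. First, that the sequence is exact, the only nontrivial part being local surjectivity of $\pi$: given a germ $\sigma$ of a continuous basic section of $\nu^{1,0}$ with $\overline{\partial}\sigma\in L^\infty$, I must produce a local lift to $TM$ of modulus of continuity $\epsilon\log\epsilon$. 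In a foliation chart $(U_i,\varphi_i)$ with transverse coordinate $z=\varphi_i$, the bundle $\nu^{1,0}$ is trivialized by $\partial_z$ and $\sigma=f(z)\,\partial_z$ with $f$ continuous and $\partial_{\overline z}f\in L^\infty$; composing with a smooth splitting of the exact sequence $0\to T\mathcal{F}\to TM\to\nu^{1,0}\to0$ transports $\sigma$ to a section of $TM$ whose modulus of continuity equals that of $f$. Second, that the kernel sheaf $\mathcal{C}^{\epsilon \log\epsilon}(T\mathcal{F})$ is fine, which holds because multiplying an $\epsilon\log\epsilon$-continuous section by a smooth bump function preserves the modulus, so smooth partitions of unity act; fineness forces $H^1=0$. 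The long exact cohomology sequence then makes $H^0(M,\mathcal{C}_{\mathcal{F}}^{\epsilon \log\epsilon}(TM))\to H^0(M,\mathcal{C}_{\mathcal{F}}(\nu^{1,0}))$ surjective, which is precisely part 1.

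The analytic heart of part 1, and the step I expect to be the main obstacle, is the regularity estimate concealed in local surjectivity: a continuous function $f$ on a disc with $\partial_{\overline z}f\in L^\infty$ automatically has modulus of continuity $\epsilon\log\epsilon$. I would obtain this from the Cauchy--Pompeiu representation $f=h+C[\partial_{\overline z}f]$, where $h$ is holomorphic (hence smooth) and $C[g](z)=-\tfrac1\pi\int \tfrac{g(w)}{w-z}\,dA(w)$ is the solid Cauchy transform; the classical bound $|C[g](z_1)-C[g](z_2)|\le C\,\|g\|_\infty\,|z_1-z_2|\,\bigl(1+|\log|z_1-z_2||\bigr)$ for $g\in L^\infty$ is exactly the log-Lipschitz estimate defining the $\epsilon\log\epsilon$ class. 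This is where the precise modulus $\epsilon\log\epsilon$, rather than mere continuity, is produced, and it explains why this is the natural regularity to work with throughout.

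\emph{Part 2.} Given $\widetilde X\in H^0(M,\mathcal{C}_{\mathcal{F}}^{\epsilon \log\epsilon}(TM))$, I would first record unique local integrability: a field of modulus $\epsilon\log\epsilon$ admits the Osgood modulus $\omega(r)=-Cr\log r$, and since $\int_0 dr/\omega(r)=\infty$, Osgood's uniqueness criterion (the reference \cite{R}) gives a unique integral curve through each point, hence a local flow satisfying the group law. Global existence for all $t\in\mathbb{R}$ follows from compactness of $M$, which bounds $\widetilde X$ and rules out finite-time escape, so the local flow extends to $\phi:M\times\mathbb{R}\to M$. For foliation-preservation I would argue in a foliation chart with leaf coordinates $y$ and transverse coordinate $z=\varphi_i$: writing $\widetilde X=a(y,z)\,\partial_y+b(z)\,\partial_z$, the transverse component $b$ depends on $z$ alone, exactly because $\pi(\widetilde X)$ is basic. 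Thus the equation $\dot z=b(z)$ decouples from $y$, the flow takes the form $(y,z)\mapsto(Y_t(y,z),Z_t(z))$, and two points sharing a value of $z$ (i.e.\ lying on a common plaque) are carried to points again sharing a value of $z$; hence $\phi$ sends leaves to leaves. This last argument uses only the independence of $b$ from $y$ and is insensitive to the low transverse regularity, so no bracket computation requiring differentiability of $b$ is needed.
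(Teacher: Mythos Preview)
Your proposal is correct and follows exactly the route the paper takes: the paper derives part~1 from the vanishing $H^1(M,\mathcal{C}^{\epsilon\log\epsilon}(T\mathcal{F}))=0$ (fineness of the kernel sheaf) applied to the short exact sequence, and part~2 from unique integrability of $\epsilon\log\epsilon$ vector fields via the reference \cite{R}. You have simply unpacked the two steps the paper leaves implicit---the Cauchy--Pompeiu estimate behind local surjectivity of $\pi$, and the Osgood/compactness/chart argument behind global existence and foliation-preservation of the flow---so there is nothing to correct.
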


We can now define the Fatou and Julia sets of a transversely holomorphic foliated compact manifold $(M,\mathcal{F})$:

\begin{df}
\begin{itemize}
\item The Julia set of $(M,\mathcal{F})$ is the closed saturated set where all the elements of $H^0(M,\mathcal{C}_{\mathcal{F}}(\nu^{1,0}))$ vanish :
\[Julia(\mathcal{F})=\left\{x \in M \text{ s.t. } X(x)=0 \quad \forall X\in H^0(M,\mathcal{C}_{\mathcal{F}}(\nu^{1,0}))\right\}\]
\item The Fatou set of $(M,\mathcal{F})$ is the open and saturated set defined as the complement of the Julia set:
\[Fatou(\mathcal{F})=M-Julia(\mathcal{F})\]
\end{itemize}
\end{df}

Using the existence of non trivial basic normal vector fields at every point of the Fatou set, the authors prove the following homogeneity property:

\begin{pro}\cite{GGS}
Let $F_k$ be a connected component of the Fatou set. Given $x_1$ and $x_2$ two points in $F_k$, there is a $\mathcal{F}$-preserving homeomorphism of $M$ sending $x_1$ to $x_2$.
\end{pro}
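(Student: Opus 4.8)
The plan is to show that the group $G\subset\mathrm{Homeo}(M,\mathcal F)$ generated by the flows furnished by the preceding lemma, together with the flows of leafwise $\epsilon\log\epsilon$ vector fields, acts transitively on $F_k$; transitivity will follow from the connectedness of $F_k$, since distinct $G$-orbits are disjoint and, as I will argue, open.

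Fix $x\in F_k$. Since $x\notin Julia(\mathcal F)$, choose $X\in H^0(M,\mathcal C_{\mathcal F}(\nu^{1,0}))$ with $X(x)\neq 0$. Because $\nu^{1,0}$ is a complex line bundle and multiplication by the constant $i$ preserves continuity as well as the condition $\overline\partial\sigma\in L^\infty_{\mathcal F}(\nu^{1,0}\otimes\nu^{0,1*})$, the section $iX$ again lies in $H^0(M,\mathcal C_{\mathcal F}(\nu^{1,0}))$, and $X(x),(iX)(x)$ form an $\mathbb R$-basis of $\nu^{1,0}_x\cong\mathbb C$. Lift $X$ and $iX$ to $\widetilde X_1,\widetilde X_2\in H^0(M,\mathcal C_{\mathcal F}^{\epsilon\log\epsilon}(TM))$ with global $\mathcal F$-preserving flows $\phi_1^s,\phi_2^t$. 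In a foliation chart $\varphi\colon U\to\mathbb C$ with $\varphi(x)=0$, each $\widetilde X_j$ is $\varphi$-related to the continuous vector field $X_j$ on $\varphi(U)$ ($X_1=X$ and $X_2=iX$ read in the transversal), so $\varphi\circ\phi_j^\bullet=\psi_j^\bullet\circ\varphi$ for the flows $\psi_j$ of $X_j$. I would then study $\Psi(s,t):=\psi_2^t\bigl(\psi_1^s(0)\bigr)$ near $(0,0)$: since $X_1$ and $iX_1$ are transverse (indeed orthogonal) continuous line fields near $0$ with unique integral curves, their flows weave a topological coordinate net around $0$, so $\Psi$ is continuous and injective on a small square, hence open by invariance of domain, and its image is a neighborhood $W$ of $0$ in $\mathbb C$. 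Pulling this back, the points $\phi_2^t\circ\phi_1^s(x)$, for $(s,t)$ in a small square, realize in the $G$-orbit of $x$ one point above each transversal value in $W$.

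It then remains to move freely within a single plaque of $L_x$. As $\mathcal C^{\epsilon\log\epsilon}(T\mathcal F)$ is a fine sheaf, there exist global leafwise vector fields of modulus $\epsilon\log\epsilon$ realizing, inside a foliation chart and after a cutoff, any prescribed constant plaque direction; integrating them gives $\mathcal F$-preserving homeomorphisms that translate along plaques. Composing finitely many of these, from any point of a plaque $P\times\{w\}$ close to $x$ one reaches a whole subplaque $P'\times\{w\}$, uniformly in $w$. Combined with the previous step, the $G$-orbit of $x$ contains $P'\times W$, a neighborhood of $x$ in $M$, a fortiori in $F_k$. Hence every $G$-orbit is open; by connectedness $F_k$ is a single orbit, and any element of $G$ sending $x_1$ to $x_2$ proves the proposition.

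The main obstacle is the step asserting that $\Psi$ is an open map: the lifted fields are only continuous (modulus $\epsilon\log\epsilon$), so the inverse function theorem is unavailable, and openness must be extracted from the uniqueness of the flows together with a purely topological argument — first showing $\Psi$ is locally injective (using that the integral curves of $X_1$ and of $iX_1$ are transverse one-dimensional foliations by $C^1$ arcs, here one exploits that $iX_1$ is the pointwise $90^\circ$ rotation of $X_1$ in the holomorphic transversal), and then invoking invariance of domain. Everything else is the routine bookkeeping of composing flows and patching foliation charts.
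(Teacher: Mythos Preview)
Your argument is correct and follows the same strategy as the paper: show that the group generated by flows of lifted basic normal fields together with flows of leafwise fields has open orbits in $F_k$, then conclude by connectedness. The only notable difference is in how the transversal direction is swept out. You compose the two flows $\phi_2^t\circ\phi_1^s$ of lifts of $X$ and $iX$ and then appeal to invariance of domain, carefully isolating the openness of $\Psi$ as the delicate point. The paper instead uses a single ``polar'' map
\[
(t e^{i\theta},x)\longmapsto \text{(flow of }\widetilde X\cos\theta+\widetilde Y\sin\theta\text{ at time }t\text{)}(x),
\]
so that each ray is already an integral curve of one nonvanishing field; it then simply asserts that this reaches every point of a small transverse disc and does not spell out the openness argument you flagged. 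Your treatment is therefore a bit more explicit on exactly the step the paper leaves to the reader; the substance is the same.
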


\begin{proof}
First, for the tangent direction, given a point $x$ in $M$, it is easy to find $\mathcal{F}$ preserving homeomorphisms sending $x$ to any point located in the same leaf as $x$ (we just have to integrate vector fields tangent to the leaves).

For the transversal direction, we use the fact that if $x$ is in the Fatou set, then there exists $X \in  H^0(M,\mathcal{C}_{\mathcal{F}}(\nu^{1,0}))$ with $X(x)\neq 0$. Let $\tilde{X}$ and $\tilde{Y}$ in $H^0(M,\mathcal{C}_{\mathcal{F}}^{\epsilon\log\epsilon}(TM))$ lifting $X$ and $iX$ and consider the map:
\[\begin{array}{ccccc}
\phi & : & \mathbb{C}\times M & \to & M \\
& & (te^{i\theta},x) & \mapsto & \phi(te^{i\theta},x) \\
\end{array}\]
which associates to $(te^{i\theta},x)$ the point $\phi(te^{i\theta},x)$ of $M$ obtained by integrating the vector field $\tilde{X_{\theta}}:=\tilde{X}\cos{\theta}+\tilde{Y}\sin{\theta}$ at time $t$. As $\tilde{X_{\theta}}$ is a basic vector field, if we fix $te^{i\theta}$, the map $\phi(.,te^{i\theta})$ preserves $\mathcal{F}$. With this method, we get $\mathcal{F}$ preserving homeomorphisms sending $x$ to any point located in a transverse topological disc centered in $x$. 

A composition of maps of both types prove the assertion. 
\end{proof}

The last proposition is the key to prove the following theorem:

\begin{teo}\cite{GGS}\label{theoGGS1}
Let $(M,\mathcal{F})$ be a transversely holomorphic foliated compact manifold. Let $\mathcal{F}_k$ be the restriction of $\mathcal{F}$ to a connected component $F_k$ of $Fatou(\mathcal{F})$. Then, there are $3$ exclusive cases:
\begin{enumerate}
\item Wandering component: the leaves of $\mathcal{F}_k$ are closed in $F_k$.
\item Semi-wandering component: the closure of the leaves of $\mathcal{F}_k$ form a real codimension $1$ foliation of $F_k$ which has a structure of a fibre bundle over a $1$-dimensional manifold.
\item Dense component: all the leaves of $\mathcal{F}_k$ are dense in $F_k$.
\end{enumerate} 
\end{teo}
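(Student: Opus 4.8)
The plan is to exploit the homogeneity furnished by the previous proposition, following the philosophy of Molino's theory of Riemannian foliations \cite{Mo}. Let $G$ denote the group of $\mathcal{F}$-preserving homeomorphisms of $M$; by the proposition it acts transitively on each component $F_k$. For $x \in F_k$ write $\overline{L_x}$ for the closure in $F_k$ of the leaf of $\mathcal{F}_k$ through $x$. Every $g \in G$ is a homeomorphism permuting leaves, so $g(\overline{L_x}) = \overline{L_{g(x)}}$, and hence the partition $\mathcal{C} = \{\overline{L_x} : x \in F_k\}$ of $F_k$ is permuted transitively by $G$. In particular the germ of the pair $(F_k, \overline{L_x})$ at $x$ does not depend on $x$: there is a single local topological model, and the trichotomy will come from classifying it.

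I would then read this model in a foliated chart $U \cong D^{d} \times \Sigma$ of $\mathcal{F}_k$, with $\Sigma \subseteq \mathbb{C}$ open and plaques $D^{d}\times\{z\}$. After shrinking $U$, the set $\overline{L_x}\cap U$ is plaque-saturated, so it is of the form $D^{d}\times K$ for a closed subset $K = K_x \subseteq \Sigma$. Because $\mathcal{C}$ is $G$-homogeneous, and because the twisted flows built in the proof of the homogeneity proposition (integrating the lifts of the fields $e^{i\theta}X$) already carry $x$ onto a whole transverse neighbourhood while preserving $\mathcal{C}$, the germ of $K$ is the same at each of its own points: $K$ is a \emph{locally homogeneous} closed subset of $\Sigma$, invariant under a pseudogroup that contains the holonomy of $\mathcal{F}_k$ together with these flows and acts transitively near $K$.

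The heart of the matter is to classify such $K$. Topological homogeneity alone is too weak — a Cantor set is topologically homogeneous — so one must use the transverse holomorphic structure: the holonomy maps are biholomorphisms, their twists by $e^{i\theta}$ are real-analytic, and a locally homogeneous closed subset of a surface left invariant by a transitive pseudogroup of such maps is necessarily (i) discrete, (ii) a properly embedded $1$-dimensional submanifold, or (iii) all of $\Sigma$. Transporting this back through the chart: in case (i) each leaf of $\mathcal{F}_k$ coincides with its closure, hence is closed in $F_k$ (wandering component); in case (iii) each leaf is dense in $F_k$ (dense component); in case (ii) the members of $\mathcal{C}$ are connected closed submanifolds of real codimension $1$ partitioning $F_k$, i.e. the leaf closures form a real codimension $1$ foliation of $F_k$ (semi-wandering component). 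The three cases are mutually exclusive because the local model is uniform over all of $F_k$.

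Finally, in case (ii) I would upgrade the leaf-closure foliation to a locally trivial fibre bundle over a $1$-manifold. Transversally it induces a $1$-dimensional foliation of $\Sigma$ whose leaves are the submanifolds $K$; being itself $G$-homogeneous this foliation has no singular leaves and trivial holonomy, and its local leaf spaces glue into a Hausdorff connected $1$-manifold $B$ (so $B \cong \mathbb{R}$ or $B \cong S^1$) — homogeneity being precisely what rules out Reeb components, exceptional minimal sets and non-Hausdorff quotients. A standard monodromy/Reeb-stability argument then presents the quotient map $F_k \to B$ as a locally trivial bundle whose fibres are the leaf closures. I expect the two genuinely substantive points to be the classification of $K$ above — extracting submanifold regularity from homogeneity \emph{plus} transverse holomorphy, rather than from mere topological homogeneity — and this last bundle statement; everything else is bookkeeping with foliated charts.
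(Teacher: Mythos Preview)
The paper does not prove this theorem: it is quoted from \cite{GGS}, and the only indication given about the argument is the sentence ``Using this homogeneity property and by analogy with Molino's theory \cite{Mo}, Ghys, G\'omez-Mont and Saludes proved that there are only three exclusive cases\ldots''. Your outline is entirely consistent with that one-line description --- transitivity of the $\mathcal{F}$-preserving homeomorphism group on $F_k$, uniform local model for leaf closures, trichotomy on the transverse trace $K$ --- so at the level of strategy there is nothing to compare against beyond that.

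One concrete correction, though, precisely at the point you flag as substantive. You write that the holonomy maps are biholomorphisms and that ``their twists by $e^{i\theta}$ are real-analytic''. The first is true; the second is not. The flows $\phi(te^{i\theta},\cdot)$ used in the homogeneity proposition integrate vector fields that are only of class $\epsilon\log\epsilon$ on $M$; transversally they are flows of sections $X\in H^0(M,\mathcal{C}_{\mathcal{F}}(\nu^{1,0}))$ with $\bar\partial X\in L^\infty$, so the induced transverse maps are quasiconformal (this is exactly why \cite{R} is invoked for unique integrability), not real-analytic. This matters for your ``classification of $K$'': ruling out a Cantor transverse trace under a transitive pseudogroup of \emph{quasiconformal} germs is a different --- and subtler --- statement than under real-analytic ones, and you should expect the actual argument in \cite{GGS} to use more than generic rigidity of the maps (for instance, the linear/Lie-algebra structure of $H^0(M,\mathcal{C}_{\mathcal{F}}(\nu^{1,0}))$ itself). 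Your identification of where the work lies is correct, but the regularity you are assuming is not available.
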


The transversely holomorphic structure of the foliation naturally endows the leaf space with a conformal structure. In general, this space is non Hausdorff. But, if one restricts the leaf space to a wandering component, one can prove the following anologuous of Ahlfors' finiteness theorem:

\begin{teo}\cite{GGS}\label{theoGGS2}
Let $F_k$ be a wandering component of the Fatou set. Then, the leaf space $\Sigma_k:=F_k/\mathcal{F}_k$ is a finite Riemann surface (Hausdorff). In other words, $\Sigma_k$ is a compact Riemann surface with a finite number of points deleted.
\end{teo}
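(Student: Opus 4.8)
The plan is to recover the structure of $\Sigma_k:=F_k/\mathcal{F}_k$ in three stages: first put a transverse complex structure on $\Sigma_k$ making it a (possibly non-Hausdorff) Riemann surface, then use the wandering hypothesis to upgrade it to a Hausdorff surface, and finally prove that this surface is of finite type by an Ahlfors-type finiteness argument exploiting the compactness of $M$.

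For the first stage I would argue locally. Fix $x\in F_k$; by definition of the Fatou set there is a basic normal vector field $X\in H^0(M,\mathcal{C}_{\mathcal{F}}(\nu^{1,0}))$ with $X(x)\neq 0$. Lifting $X$ and $iX$ to vector fields of class $\mathcal{C}_{\mathcal{F}}^{\epsilon\log\epsilon}$ and integrating them exactly as in the proof of the homogeneity proposition produces a local $\mathbb{C}$-action near $x$ whose orbit through $x$ is a transversal $T_x$, biholomorphic to a disc in $\mathbb{C}$, meeting every nearby leaf; since the flows used preserve $\mathcal{F}$, the corresponding foliated chart has trivial holonomy. Transverse holomorphicity of $\mathcal{F}$ makes every change of transversal a biholomorphism, so the transversals $T_x$ glue into a one-dimensional complex structure on $\Sigma_k$, and the wandering hypothesis (all leaves of $\mathcal{F}_k$ closed in $F_k$) shows that points of $\Sigma_k$ are closed.

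For Hausdorffness I would use that the transverse structure of $\mathcal{F}_k$ is \emph{complete}: the lifts of basic vector fields integrate to \emph{global} flows on $M$, and these flows act on the sheaf of basic normal vector fields, hence preserve the Fatou set and its components. If two leaves $L_1\neq L_2$ of $\mathcal{F}_k$ were non-separated in $\Sigma_k$, one fixes $x_1\in L_1$ with a transversal $T_{x_1}$ as above together with a sequence of leaves meeting $T_{x_1}$ and converging in $\Sigma_k$ also to $L_2$; transporting $T_{x_1}$ along these global flows one reaches a transversal through a point of $L_2$, and the leaf obtained in the limit is closed in $F_k$ by the wandering hypothesis and still meets $T_{x_1}$, forcing $L_1=L_2$, a contradiction. (Equivalently, this is the ``complete transversely homogeneous'' situation of the Molino-type analysis already used for Theorem \ref{theoGGS1}.) Thus $\Sigma_k$ is a Hausdorff Riemann surface and $F_k\to\Sigma_k$ is a genuine holonomy-free fibration with connected fibre a leaf of $\mathcal{F}_k$.

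The last stage, showing that $\Sigma_k$ is a compact Riemann surface with finitely many points removed, is the crux and the main obstacle. Covering compact $M$ by finitely many foliated product charts shows that $\Sigma_k$ is a union of finitely many planar domains, but that alone does not bound its topology --- it does not by itself exclude, say, $\Sigma_k$ being a disc minus a Cantor set --- so the Fatou hypothesis must genuinely be brought in. I would run the classical argument of Ahlfors' finiteness theorem at the level of the holonomy pseudogroup of $\mathcal{F}$, which is compactly generated because $M$ is compact: this compact generation plays the role of the finite generation of a Kleinian group, the Fatou condition guarantees that the relevant deformation (Eichler-type) cohomology is unobstructed at every point of $F_k$, and a Bers-type area estimate then bounds simultaneously the genus of $\Sigma_k$ and its number of ends; alternatively one can feed the completeness of the transverse structure into Molino's structure theory over the wandering component to identify $\Sigma_k$ directly. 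Either way, it is this finiteness estimate --- not the Riemann surface structure nor Hausdorffness --- that I expect to be the delicate step, and it is precisely there that the compactness of $M$ and the Fatou hypothesis are really used.
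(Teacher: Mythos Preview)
The paper does not contain a proof of this theorem: it is stated with an explicit citation to \cite{GGS} and no argument is given, exactly as for Theorem~\ref{theoGGS1}. There is therefore nothing in the present paper to compare your proposal against.

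That said, your three-stage outline (transverse complex structure, Hausdorffness from completeness of the basic flows plus the wandering hypothesis, then an Ahlfors-type finiteness bound) is indeed the shape of the argument in \cite{GGS}, and you have correctly identified the finiteness step as the substantive one. Your sketch is a plan rather than a proof: the Hausdorffness paragraph in particular is hand-wavy (``transporting $T_{x_1}$ along these global flows one reaches a transversal through a point of $L_2$'' needs a genuine argument about which flow to apply and why the limit leaf still meets $T_{x_1}$), and in the last stage you offer two alternatives without committing to either. If you want this to stand as a proof you would need to flesh out those steps; as a reading guide to \cite{GGS} it is accurate.
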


\begin{ex}
The most basic examples are the linear flows of the torus $\mathbb{T}^3=\mathbb{R}^3/\mathbb{Z}^3$: the foliation in $\mathbb{R}^3$ given by parallel lines is invariant by the action of $\mathbb{Z}^3$. So it defines a foliation of $\mathbb{T}^3$ which is transversely affine and so transversely holomorphic. Changing the slope of the parallel lines, one gets a foliation with one wandering component or one semi-wandering component or one dense component. These examples are not very interesting because the Julia set is vacuous but it illustrates the three types of Fatou components of theorem \ref{theoGGS1}.

Another class of examples are the suspensions of representations $\rho:\pi_1(S)\rightarrow PSL(2,\mathbb{C})$ where $S$ is a compact Riemann surface. These examples have already been studied in the introduction. 

Many interesting examples can be found in \cite[section~8]{GGS}. The authors exhibit examples of transversely holomorphic foliations whose Fatou set consists of an arbitrary number of connected components and whose Julia set is the disjoint union of codimension $1$ submanifolds. They also exhibit examples with a Julia set having non empty interior without being the whole manifold.

\end{ex}

\section{Harmonic morphisms are Brownian path preserving.}

\paragraph{Harmonic morphisms.}

We start with some basic facts about harmonic morphisms. The reader who wants to learn more about this theory could refer, for example, to the survey of John C.Wood \cite{W} or to the book of H.Urakawa \cite{U}.

Let $(M,g)$ and $(N,h)$ be two $C^{\infty}$ Riemannian manifolds whose dimensions are respectively $m$ and $n$. Denote $\Delta$ the Laplace-Beltrami operator in $M$. A $C^{\infty}$ map $f:M\rightarrow\mathbb{R}$ satisfying $\Delta f=0$ is called a harmonic map.

\begin{df}
A $C^{\infty}$ map $\Phi:(M,g)\rightarrow (N,h)$ is a harmonic morphism if for any any open set $V\subset N$ with $\Phi^{-1}(V)\neq \varnothing$ and for any harmonic map $f:V\rightarrow\mathbb{R}$, the map  $f\circ\Phi$ is a harmonic map on $\Phi^{-1}(V)$.
\end{df}

 





\begin{df}\label{defmh}
A $C^{\infty}$ map $\Phi:(M,g)\rightarrow (N,h)$ is said to be horizontally weakly conformal if for any point $p$ in $M$ such that $D\Phi_p\neq 0$, the differential $D\Phi_p$ sends conformally the horizontal space $\ker(D\Phi_p)^{\bot}$ on the tangent space $T_{\Phi(p)}N$, in other words $D\Phi_p$ is onto and there exists a real $\lambda(p)\neq 0$ such that for all $X,Y \in \ker(D\Phi_p)^{\bot}$:
\[h_{\phi(p)}(D\Phi_p(X),D\Phi_p(Y))=\lambda(p)^2g_p(X,Y)\]
\end{df}



The following caracterisation of harmonic morphisms will be useful later:

\begin{teo}\cite{BE}\label{caractmh}
Suppose that the dimension of $N$ is $n=2$. Let $\Phi:(M,g)\rightarrow (N,h)$ be a horizontally weakly conformal map. Then $\Phi$ is a harmonic morphism if and only if the fibre of $\Phi$ over any regular point is minimal.
\end{teo}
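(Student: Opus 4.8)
The final "statement" in the excerpt is Theorem \ref{caractmh} (the Baird–Eells characterization of harmonic morphisms to surfaces): a horizontally weakly conformal map $\Phi$ onto a $2$-manifold is a harmonic morphism iff its fibres over regular points are minimal. Since this is quoted as \cite{BE}, the paper will not reprove it — so I will sketch how I would prove it.

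\medskip

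The plan is to use the \emph{tension field} characterization of harmonic morphisms together with the general fact (Fuglede, Ishihara) that a map is a harmonic morphism precisely when it is horizontally weakly conformal \emph{and} harmonic (i.e.\ its tension field $\tau(\Phi)$ vanishes). Given that $\Phi$ is already assumed horizontally weakly conformal, the whole problem reduces to relating the vanishing of $\tau(\Phi)$ to the minimality of the regular fibres, and here the dimension hypothesis $n=2$ is what makes the relation an exact equivalence rather than a one-sided implication. So first I would recall, at a regular point $p$ with dilation $\lambda(p)\neq 0$, the Ishihara–Baird formula
\[
\tau(\Phi) \;=\; -\,(\dim M - 2)\,\Phi_*\!\bigl(\mathrm{grad}_{\mathcal H}\,\tfrac12\log\lambda^2\bigr)\;-\;(\dim N - 2)\,\lambda^2\,\mu^{\mathcal V},
\]
where $\mu^{\mathcal V}$ is the mean curvature vector of the fibres (the trace over the vertical distribution of the second fundamental form of the fibres in $M$), and $\mathrm{grad}_{\mathcal H}$ is the horizontal part of the gradient. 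The point of the computation behind this formula is to split $\nabla d\Phi$ into its horizontal–horizontal, horizontal–vertical, and vertical–vertical parts; horizontal conformality kills the mixed terms after tracing, the horizontal–horizontal trace produces the $(\dim M - 2)$ dilation term, and the vertical–vertical trace produces exactly $-(\dim N-2)\lambda^2$ times the fibre mean curvature.

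With this identity in hand the theorem is immediate when $\dim N = 2$: the first term survives, but the \emph{second} term, carrying the fibre mean curvature, is multiplied by $\dim N - 2 = 0$ and therefore drops out entirely. Hence for a surface target, $\tau(\Phi)=0$ if and only if $\Phi_*(\mathrm{grad}_{\mathcal H}\log\lambda^2)=0$. One then has to connect this to minimality of the fibres: differentiating the conformality relation $h(\Phi_*X,\Phi_*Y)=\lambda^2 g(X,Y)$ in a vertical direction and using the Codazzi-type identities shows that the horizontal gradient of $\log\lambda^2$ is, up to sign and a nonzero factor, dual to the fibre mean curvature vector $\mu^{\mathcal V}$ pushed into the horizontal distribution — this is the classical "horizontal conformality forces $\lambda$ to vary only in the direction of the fibre mean curvature" phenomenon. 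Therefore $\mathrm{grad}_{\mathcal H}\log\lambda^2 = 0$ on a neighbourhood of a regular point is equivalent to $\mu^{\mathcal V}=0$ there, i.e.\ to minimality of the regular fibres. (On the critical set $D\Phi=0$ there is nothing to check, and one argues that regular points are dense in any fibre through a regular point, or works locally.)

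The main obstacle — and the only genuinely delicate point — is establishing rigorously the equivalence between $\mathrm{grad}_{\mathcal H}\log\lambda^2 = 0$ and $\mu^{\mathcal V}=0$, because a priori the Ishihara–Baird formula only gives that both terms together vanish, and when $\dim N=2$ it gives that the \emph{dilation} term alone vanishes; one still needs the independent structural fact that for a horizontally conformal map the dilation term and the mean-curvature term are each multiples of the same vector field. This is exactly where one uses that $N$ is two-dimensional a second time, more subtly: in dimension $2$ horizontal conformality is automatic on the $2$-dimensional horizontal distribution and the integrability/second-fundamental-form bookkeeping simplifies, forcing the proportionality. Once that lemma is in place, the theorem follows by reading off the two sides of the simplified tension-field identity. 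The remaining steps — verifying the tension-field splitting, handling the critical set, and the density argument for regular points in a fibre — are routine and I would only indicate them.
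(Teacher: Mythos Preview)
The paper does not prove this theorem; it is simply cited from Baird--Eells, so there is no paper proof to compare against. I will therefore assess your sketch on its own merits.

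Your overall strategy is right: invoke the Fuglede--Ishihara characterization (harmonic morphism $\Leftrightarrow$ horizontally weakly conformal and harmonic) and then analyse the tension field via the fundamental equation for horizontally weakly conformal maps. The problem is that you have the coefficients in the fundamental equation swapped. The correct Baird--Eells formula at a regular point of a horizontally weakly conformal map $\Phi:(M^m,g)\to(N^n,h)$ with dilation $\lambda$ is
\[
\tau(\Phi)\;=\;-(n-2)\,\Phi_*\!\bigl(\mathrm{grad}_{\mathcal H}\ln\lambda\bigr)\;-\;(m-n)\,\Phi_*\!\bigl(\mu^{\mathcal V}\bigr),
\]
with the $(n-2)$ in front of the \emph{dilation} term and the $(m-n)$ in front of the \emph{fibre mean curvature} term --- exactly the opposite of what you wrote. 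Consequently, when $n=2$ it is the dilation term that disappears, and one is left with $\tau(\Phi)=-(m-2)\,\Phi_*(\mu^{\mathcal V})$. Since $\Phi_*$ is injective on horizontal vectors and $\mu^{\mathcal V}$ is horizontal, $\tau(\Phi)=0$ is \emph{directly} equivalent to $\mu^{\mathcal V}=0$, i.e.\ to minimality of the regular fibres. The theorem then follows in one line, with no further argument needed.

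Your subsequent ``structural fact'' --- that $\mathrm{grad}_{\mathcal H}\ln\lambda$ and $\mu^{\mathcal V}$ are proportional for horizontally conformal maps --- is not correct in general and should be discarded. A Riemannian submersion ($\lambda\equiv 1$) with non-minimal fibres already gives a counterexample: the horizontal gradient of $\ln\lambda$ vanishes identically while $\mu^{\mathcal V}\neq 0$. That you were led to seek such a lemma is a symptom of the swapped coefficients; once the formula is corrected, the detour becomes unnecessary and the delicate point you flagged simply evaporates.
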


\paragraph{Brownian motion.}

Consider a connected Riemannian manifold $(M,g)$ with bounded geometry. We denote $(B_t)_{t\geq 0}$ the Brownian motion on $M$, i.e. the diffusion process associated to the Laplace-Beltrami operator $\Delta$ on $(M,g)$. The Brownian motion is defined on the family of probability spaces $\left(\Omega_x,\mathbb{P}_x\right)_{x\in M}$ where $\Omega_x$ is the space of continuous paths $\omega:[0,\infty[\rightarrow M$ such that $\omega(0)=x$ and $\mathbb{P}_x$ is the so-called Wiener measure on $\Omega_x$. The reader could refer to \cite{CC2} for a construction of the Brownian motion on manifolds and basic facts on the subject.



In $1940$, Paul Lévy proved that a conformal map is Brownian path preserving (see \cite{Le}). The following result is a generalisation of Paul Lévy's theorem. It asserts that the maps between Riemannian manifolds which are Brownian path preserving are the harmonic morphisms. This property has been proved in the case of harmonic morphisms between Euclidean spaces in \cite{BCD}. The general case of harmonic morphisms between Riemannian manifolds has been proved later in \cite{D}. Before enonciate this theorem, let us give a definition of a Brownian path preserving map.

\begin{df}
Let $(M,g)$ and $(N,h)$ be two complete Riemannian manifolds and let $U\subset M$ be an open set. A map $\Phi:U\longrightarrow N$ is said to be Brownian path preserving if the following are satisfied for all $x\in U$ and every Brownian motion $(B_t)_{t\geq 0}$ starting at $x$:
\begin{enumerate}
\item There is a mapping $\omega\mapsto\sigma_{\omega}$ such that for every $\omega$, the map $\sigma_{\omega}:[0,T[\rightarrow[0;\infty[$ is a continuous and strictly increasing function (where $T:=\inf\{t\in[0;\infty]\text{ s.t. }B_t\notin U\}$ is the exit time of $U$). 
\item There is a Brownian motion $(B'_s)_{s\geq 0}$ starting at $\Phi(x)$ such that:
$$\Phi \circ B_t=B'_{\sigma(t)}.$$
\end{enumerate}
\end{df}

The theorem is:

\begin{teo}\cite[Theorem C]{D}\label{theobrownmh}
Let $(M,g)$ and $(N,h)$ be two $C^{\infty}$ Riemannian manifolds and $\Phi:M\longrightarrow N$ be a $C^{\infty}$ map. The map $\Phi$ is a harmonic morphism if and only if $\Phi$ is Brownian path preserving.
\end{teo}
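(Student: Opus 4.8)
I would build the proof on two standard equivalences. On the analytic side: $u$ is harmonic on an open set $W\subset M$ if and only if $u(B_{t})$ is a local martingale up to the exit time of $W$, for a Brownian motion $B$ on $M$. On the geometric side (Fuglede--Ishihara): $\Phi$ is a harmonic morphism if and only if it is horizontally weakly conformal in the sense of Definition~\ref{defmh} (with dilation $\lambda\geq 0$, whose zero set is the critical set of $\Phi$) and harmonic; and this is in turn equivalent to the composition identity
\[
\Delta^{M}(f\circ\Phi)=\lambda^{2}\,\bigl((\Delta^{N}f)\circ\Phi\bigr)
\]
valid for every $C^{\infty}$ function $f$ defined on an open subset of $N$. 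This identity is the bridge I would use between the probabilistic and the differential-geometric pictures.

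\emph{Harmonic morphism $\Rightarrow$ Brownian path preserving.} Let $B$ be a Brownian motion on $M$ started at $x$. For smooth $f$ on $N$, It\^o's formula combined with the composition identity gives
\[
f(\Phi(B_{t}))=f(\Phi(x))+M^{f}_{t}+\tfrac12\int_{0}^{t}\lambda^{2}(B_{s})\,(\Delta^{N}f)(\Phi(B_{s}))\,ds ,
\]
with $M^{f}$ a local martingale. I would then introduce the clock $\sigma(t):=\int_{0}^{t}\lambda^{2}(B_{s})\,ds$, a continuous nondecreasing adapted process, set $\tilde B_{s}:=\Phi(B_{\sigma^{-1}(s)})$, and apply the time-change rule for stochastic integrals to obtain $f(\tilde B_{s})=f(\tilde B_{0})+\tilde M^{f}_{s}+\tfrac12\int_{0}^{s}(\Delta^{N}f)(\tilde B_{v})\,dv$ for \emph{every} smooth $f$, with $\tilde M^{f}$ a local martingale. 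Hence $\tilde B$ solves the martingale problem for $\tfrac12\Delta^{N}$, so it is a Brownian motion on $N$ and $\Phi\circ B=\tilde B\circ\sigma$. To conclude I must check that $\sigma$ is almost surely continuous and strictly increasing, i.e.\ that $\lambda^{2}(B_{s})>0$ for Lebesgue-a.e.\ $s$; this follows because a non-constant harmonic morphism has critical set of zero Lebesgue measure, while $\mathbb{E}_{x}\!\int_{0}^{t}\mathbf{1}_{\{\lambda=0\}}(B_{s})\,ds=\int_{0}^{t}\mathbb{P}_{x}(B_{s}\in\{\lambda=0\})\,ds=0$ since the heat kernel is a genuine density.

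\emph{Brownian path preserving $\Rightarrow$ harmonic morphism.} Fix an open set $V\subset N$ and a harmonic function $f$ on $V$; the goal is $\Delta^{M}(f\circ\Phi)\equiv 0$ on $\Phi^{-1}(V)$. Starting $B$ at $x\in\Phi^{-1}(V)$ with exit time $\tau$ of $\Phi^{-1}(V)$, the hypothesis gives $\Phi\circ B=B'\circ\sigma$ for a Brownian motion $B'$ on $N$ started at $\Phi(x)$, and one checks that $\sigma(\tau)$ is the exit time of $V$ for $B'$. Since $f$ is harmonic on $V$, the process $u\mapsto f(B'_{u\wedge\sigma(\tau)})$ is a local martingale, hence so is $t\mapsto f(\Phi(B_{t\wedge\tau}))=f(B'_{\sigma(t\wedge\tau)})$ after composing with the adapted clock $\sigma$ (whose generating filtration is coarser than $B$'s). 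Comparing this with the decomposition produced by It\^o's formula, namely $f(\Phi(B_{t\wedge\tau}))=f(\Phi(x))+(\text{local martingale})+\tfrac12\int_{0}^{t\wedge\tau}\Delta^{M}(f\circ\Phi)(B_{s})\,ds$, and invoking uniqueness of the canonical decomposition of a continuous semimartingale, the finite-variation part must vanish: $\int_{0}^{t\wedge\tau}\Delta^{M}(f\circ\Phi)(B_{s})\,ds=0$ almost surely for all $t$. Letting $x$ range over $\Phi^{-1}(V)$ and using continuity forces $\Delta^{M}(f\circ\Phi)\equiv 0$ there, and since $f$ was an arbitrary local harmonic function on $N$ this says exactly that $\Phi$ is a harmonic morphism.

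\emph{Main obstacle.} I expect the genuinely delicate point to be the rigorous bookkeeping of the random time change $\sigma$: in the first implication, controlling the occupation time of the Brownian path near the critical (branch) set of $\Phi$ so that $\sigma$ is continuous and strictly increasing; and in the converse, justifying that composing a continuous local martingale with $\sigma$ again yields a local martingale in the Brownian filtration, which is what makes the decomposition-uniqueness argument legitimate. Both issues mix stochastic calculus with time changes and the structure theory of harmonic morphisms (Baird--Eells, Theorem~\ref{caractmh}, and Fuglede--Ishihara); this is precisely the technical content encapsulated by Theorem~C of \cite{D}.
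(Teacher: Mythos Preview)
The paper does not actually prove this theorem: it is quoted verbatim as \cite[Theorem~C]{D} and used as a black box, so there is no ``paper's own proof'' to compare against. Your sketch is the standard route (and is essentially the one in \cite{D} and \cite{BCD}): exploit the Fuglede--Ishihara identity $\Delta^{M}(f\circ\Phi)=\lambda^{2}\,(\Delta^{N}f)\circ\Phi$, run It\^o's formula, and identify $\Phi(B_{\sigma^{-1}(\cdot)})$ as a solution of the $\tfrac12\Delta^{N}$--martingale problem after the time change $\sigma(t)=\int_{0}^{t}\lambda^{2}(B_{s})\,ds$; for the converse, use uniqueness of the Doob--Meyer decomposition to kill the bounded-variation term. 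Your identification of the two delicate points is accurate: (i) strict monotonicity of $\sigma$, which reduces to the fact that the critical set $\{\lambda=0\}$ of a non-constant harmonic morphism is polar (Fuglede \cite{Fug}), hence has zero occupation time; and (ii) the filtration bookkeeping in the converse, where one must check that $\sigma(t)$ is a stopping time for the time-changed filtration $\mathcal G_{u}=\mathcal F_{\sigma^{-1}(u)}$ so that optional stopping transports the local-martingale property of $f(B'_{\cdot})$ back to $f(\Phi(B_{\cdot}))$ in the original filtration $\mathcal F_{t}$. With those two points handled your outline is correct and complete.
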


\textbf{Remark: } if $\Phi:(M,g)\rightarrow (N,h)$ is a harmonic morphism, then one can prove that $\Phi$ is horizontally weakly conformal \cite{Fug}. So, by definition we have a dilatation coefficient $p\in M\mapsto\lambda_p\in\mathbb{R}$. And we have an explicit formula for the time changed scale $\sigma$ of the previous theorem:
$$\sigma(t)=\displaystyle{\int_0^t}\lambda^2(B_u)du.$$

\section{Proof of theorem \ref{theoperso1}.}
Let $(M,\mathcal{F})$ be a transversely holomorphic foliation on a compact manifold. Suppose that the foliation is taut. Let $F$ be a wandering component of the Fatou set and denote $\Sigma=F/\mathcal{F}$ the leaf space, which is a finite Riemann surface by theorem \ref{theoGGS2}. 

\paragraph{Step $1$: choose a good metric on $M$.}
Endow $\Sigma$ with a complete metric $h$ with constant curvature $+1$, $0$ or $-1$ in its conformal class. We have the following:

\begin{lem}
There exists a metric $g$ on $M$ satisfying:
\begin{enumerate}
\item the fibres of $p:(F,g_{|F})\longrightarrow(\Sigma,h)$ are minimal.
\item $p:(F,g_{|F})\longrightarrow(\Sigma,h)$ is horizontally conformal. In other words, for any point $x$ in $M$, the differential $Dp_x$ sends conformally the horizontal space $\ker(Dp_x)^{\bot}$ on the tangent space $T_{p(x)}N$.
\end{enumerate}
\end{lem}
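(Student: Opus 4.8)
The plan is to build $g$ by discarding all of a taut metric except its \emph{leafwise} part, and replacing its transverse part by a metric adapted to the transversely holomorphic structure. By the tautness hypothesis, fix once and for all a metric $\bar g$ on $M$ for which every leaf of $\mathcal F$ is minimal. Put $V:=T\mathcal F$ on all of $M$ and let $H_0:=V^{\perp_{\bar g}}$; this is a smooth rank-$2$ distribution, and the projection $TM\to\nu^{1,0}$ restricts to a bundle isomorphism $H_0\cong\nu^{1,0}$ through which $H_0$ inherits the transverse complex structure. A complex line bundle carries a canonical conformal class of Euclidean fibre metrics (any two Hermitian metrics on a complex line differ by a positive scalar), so $H_0$ carries, globally on $M$, the transverse conformal structure of $\mathcal F$. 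Over $F$, the leaf-space projection $p\colon F\to\Sigma$ is a smooth submersion with $\ker Dp=V$, and $Dp$ factors through the canonical $\mathbb C$-linear isomorphism $\nu^{1,0}|_F\cong p^{*}T\Sigma$; since this isomorphism is exactly what endows $\Sigma=F/\mathcal F$ with its complex (hence conformal) structure, and since $h$ was chosen in that conformal class, $Dp$ carries the transverse conformal structure of $\mathcal F$ on $H_0|_F$ to the conformal class $[h]$.

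With this set-up, I would choose any Hermitian metric on the complex line bundle $\nu^{1,0}$ (a partition of unity exists since $M$ is compact), let $g_1$ be the Euclidean metric it induces on $H_0$, and define $g:=\bar g|_{V}\ \oplus\ g_1$, the orthogonal sum for the splitting $TM=V\oplus H_0$. Then $g$ is a smooth Riemannian metric on the whole of $M$, and by construction $g|_V=\bar g|_V$ while $V^{\perp_g}=H_0=V^{\perp_{\bar g}}$. Property~(2) is then immediate: over $F$ we have $\ker(Dp_x)=V_x$ and $\ker(Dp_x)^{\perp_g}=H_{0,x}$, and $Dp_x\colon H_{0,x}\to T_{p(x)}\Sigma$ is a linear isomorphism which, by the last sentence of the previous paragraph, is conformal from $(H_{0,x},g_{1,x})$ to $(T_{p(x)}\Sigma,h_{p(x)})$; so $p\colon(F,g|_F)\to(\Sigma,h)$ is horizontally conformal.

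For property~(1) I would first isolate the following general fact: for any metric, \emph{whether the leaves of $\mathcal F$ are minimal depends only on the leafwise metric $g|_{T\mathcal F}$ and on the orthogonal complement $T\mathcal F^{\perp}$}, not on the transverse metric. To prove it, fix a leaf $L$ with mean curvature vector $\kappa_L$, pick near a point of $L$ a local frame $E_1,\dots,E_d$ of $V$ with $g(E_i,E_j)=\delta_{ij}$, and apply the Koszul formula to a horizontal vector $Z$: since $g(E_i,Z)=0$ and $[E_i,E_i]=0$ one gets $2g(\nabla^{g}_{E_i}E_i,Z)=-2g([E_i,Z],E_i)=-2\,(g|_V)\big([E_i,Z]^{V},E_i\big)$, an expression involving only $g|_V$ and the splitting $V\oplus H$. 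Summing over $i$ shows that $g(\kappa_L,Z)$ depends only on $(V,H,g|_V)$, and since $\kappa_L\in H$ and $Z\in H$ is arbitrary, the vanishing $\kappa_L=0$ is a condition on $(V,H,g|_V)$ alone. Applying this both to $g$ and to $\bar g$, which share the same leafwise metric and the same orthogonal distribution $H_0$, I conclude that the fibres of $p$ over $F$ are $g$-minimal because the leaves of $\mathcal F$ are $\bar g$-minimal.

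The main obstacle is precisely this minimality fact together with the observation that the transverse conformal structure of $\mathcal F$ lives globally on $M$ and restricts over $F$ to $[h]$: once these two points are in place, the required $g$ is produced on all of $M$ in one stroke, with no need to interpolate near $\partial F$ — which matters, because the naive candidate $\bar g|_V\oplus p^{*}h$ genuinely degenerates there, the uniformizing derivative of $p$ blowing up or vanishing as one approaches the Julia set. A minor point worth recording for the sequel is that on a wandering component $\Sigma$ is an honest smooth surface (Theorem~\ref{theoGGS2}) and $p$ is a smooth submersion, so that Theorems~\ref{caractmh} and \ref{theobrownmh} will indeed be applicable to $p$.
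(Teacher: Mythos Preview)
Your proof is correct and follows essentially the same route as the paper: start from a taut metric, keep its leafwise part and its orthogonal splitting, and replace the transverse block by a metric in the transverse conformal class. Where the paper simply asserts that ``a modification of the metric transversely to the fibres does not change the fact that these fibres are minimal,'' you supply an actual proof via the Koszul formula, and you also make explicit why one must use a global Hermitian metric on $\nu^{1,0}$ rather than the pullback $p^{*}h$; both are genuine improvements in rigor over the paper's version.
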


\begin{proof}

Let $g_0$ be a metric on $M$ such that all the leaves of $\mathcal{F}$ are minimal. This metric has no reasons to satisfy the second item. So, we are going to modify the metric orthogonally to the leaves. For this, let $\mathcal{A}$ be an atlas with a finite number of foliated charts $\phi_{\alpha}:U_{\alpha}\rightarrow V_{\alpha}\subset \mathbb{C}$ and with transition functions $\gamma_{\alpha\beta}:\phi_{\beta}(U_{\alpha}\cap U_{\beta})\rightarrow \phi_{\alpha}(U_{\alpha}\cap U_{\beta})$. Let $u_{\alpha}$ be a partition of unity associated to this atlas. By definition, the complex structure on $\Sigma=F/\mathcal{F}$ is the one induced by the transversely holomorphic structure of the foliation. This means that $\Sigma$ is defined by the family of open sets $\phi_{\alpha}(U_{\alpha}\cap F)\subset\mathbb{C}$ which are glued together by the transition functions $\gamma_{\alpha\beta}$. So, in restriction to a chart $\phi_{\alpha}(U_{\alpha}\cap F)$, the metric $h$ is conformally equivalent to the Euclidean metric. Now, let $e_1$ and $e_2$ be two vector fields in $U_{\alpha}$ satisfying:

\begin{itemize}
\item $(\phi_{\alpha})_*(e_i)=\frac{\partial}{\partial x_i}$, for $i=1,2$
\item $e_i \in T\mathcal{F}^{\bot}$, for $i=1,2$
\end{itemize}
Complete $(e_1,e_2)$ with sections of $T\mathcal{F}_{|U_{\alpha}}$ so that, for all $x \in U_{\alpha}$, $b(x)=(e_1(x),e_2(x),e_3(x),...,e_{d+2}(x))$ is a basis of $T_x M$. We have:
\[
mat_{b(x)}g_0(x)=\begin{pmatrix}
A&0 \\
0&B
\end{pmatrix}\]
with $A=\begin{pmatrix}
a&b \\
b&d
\end{pmatrix}$.
Define the metric $g_0^{\alpha}$ in $U_{\alpha}$ by:
\[
mat_{b(x)}g_0^{\alpha}(x)=\begin{pmatrix}
C&0 \\
0&B
\end{pmatrix}\]
with $C=\begin{pmatrix}
1&0 \\
0&1
\end{pmatrix}$.
Then, write $g=\sum \limits_{\alpha}u_{\alpha}g_0^{\alpha}$.
The metric $g$ satisfies all the wanted properties: indeed we defined g so that the projection $p$ is horizontally conformal. A modification of the metric transversely to the fibres does not change the fact that these fibres are minimal.
\end{proof}

\paragraph{Step $2$: project the Brownian motion on the leaf space. }
From now on, the Brownian motion in $M$ (resp. $\Sigma$) is with respect to the metric $g$ (resp. $h$) defined in the previous lemma. We have just proved that the projection $p:(F,g_{|F})\longrightarrow(\Sigma,h)$ is horizontally conformal and that the fibres of $p$ are minimal. So, according to theorem \ref{caractmh}, $p$ is a harmonic morphism. Consequently, using theorem \ref{theobrownmh}, the map $p$ is Brownian path preserving. So, if $(B_t)_{t\geq 0}$ is the Brownian motion starting at a point $x_0$ in $F$ and stopped at the exit time $T=\inf\left\{t\geq 0\text{ s.t. }B_t \in \partial F\right\}$ of $F$, the process $p(B_{\sigma^{-1}(s)})$ is a Brownian motion in $\Sigma$ starting at $p(x_0)$ and stopped at time $\lim\limits_{t \to T}\sigma(t)$. We are going to prove that $\lim\limits_{t \to T}\sigma(t)=\infty$. It will be an easy consequence of the following:

\begin{lem}
Let $\gamma:[0,+\infty[\rightarrow M$ be a continuous path such that $\gamma(0) \in F$ and $\gamma$ hits $\partial F$ in finite time. Denote
$t_0:=\inf\{t \in [0,+\infty[\text{ s.t. } \gamma(t) \in\partial F\}$. Then $p\circ \gamma$ does not have limit when $t$ goes to $t_0$.
\end{lem}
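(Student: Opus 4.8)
The plan is to argue by contradiction. Suppose $p\circ\gamma$ converges to some point $z_0\in\Sigma$ as $t\to t_0$. Recall that $\Sigma=F/\mathcal{F}$ is a \emph{finite} Riemann surface and that in the wandering case the leaves of $\mathcal{F}_{|F}$ are precisely the fibres of the submersion $p:F\to\Sigma$, each of them closed in $F$. The contradiction should come from the fact that $\gamma(t_0)$ lies in $\partial F\subset Julia(\mathcal{F})$, hence \emph{outside} $F$, while the hypothesis forces $\gamma(t)$ to stay, for $t$ near $t_0$, inside a saturated tube around a single leaf $L=p^{-1}(z_0)$ of $F$ — and such a tube is relatively compact in $F$, so its closure cannot meet $\partial F$.

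Concretely, the key steps would be: (i) Fix a small open disc $D\ni z_0$ in $\Sigma$ with $\overline{D}$ compact in $\Sigma$. Since $p$ is a locally trivial-looking submersion onto a Hausdorff leaf space with closed leaves, I would produce a relatively compact saturated open neighbourhood $W=p^{-1}(D)$ of $L$ whose closure $\overline{W}$ (taken in $M$) is contained in $F$. The crucial point is that $\overline{p^{-1}(D)}=p^{-1}(\overline D)$ stays in $F$: this uses that the leaf space of a wandering Fatou component is Hausdorff (Theorem \ref{theoGGS2}), so the saturation of a compact set of leaves is closed in $F$, and in a wandering component one can choose $\overline D$ so that $p^{-1}(\overline D)$ is actually compact. (ii) By the assumed convergence $p(\gamma(t))\to z_0$, there is $\varepsilon>0$ with $p(\gamma(t))\in D$ for all $t\in[t_0-\varepsilon,t_0[$, hence $\gamma(t)\in p^{-1}(D)\subset\overline W$ for such $t$. (iii) By continuity of $\gamma$, $\gamma(t_0)=\lim_{t\to t_0}\gamma(t)\in\overline W\subset F$. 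But $\gamma(t_0)\in\partial F$ by definition of $t_0$, and $\partial F\cap F=\varnothing$ since $F$ is open. This is the desired contradiction.

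I would then note that this lemma immediately yields $\lim_{t\to T}\sigma(t)=\infty$ as claimed in Step 2: the changed-time process $p(B_{\sigma^{-1}(s)})$ is a Brownian motion on $\Sigma$ run up to time $S:=\lim_{t\to T}\sigma(t)$, and a Brownian path on a manifold always converges to a limit point in the one-point (or ideal) compactification as its time parameter approaches a finite lifetime; if $S<\infty$ the continuous path $s\mapsto p(B_{\sigma^{-1}(s)})$ would extend continuously to $s=S$, so $p(B_t)$ would have a limit as $t\to T$, contradicting the lemma applied to $\gamma=B_{\cdot}$ (which is almost surely continuous and, under the theorem's hypothesis $\nu(\partial F)=1$, almost surely hits $\partial F$ in finite time).

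The main obstacle is step (i): making precise the claim that a small saturated neighbourhood of a single leaf $L$ of a wandering Fatou component has compact closure inside $F$. This is really where the three structural inputs get used — that $F$ is \emph{wandering} (leaves closed in $F$), that the leaf space $\Sigma$ is \emph{Hausdorff} of finite type (Theorem \ref{theoGGS2}), and that $M$ is \emph{compact} so that $\partial F$ is the honest topological frontier of $F$ in $M$. One has to be a little careful: $p^{-1}(\overline D)$ need not be compact if $\overline D$ runs out to a puncture of $\Sigma$, but since $z_0$ is an interior point of the finite Riemann surface $\Sigma$ we may shrink $D$ so that $\overline D$ is a genuine compact disc in $\Sigma$, and then properness of $p$ over such a disc (a consequence of local triviality away from the punctures, or of the closedness of the leaves together with compactness of $M$) gives what is needed. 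Everything else is elementary point-set topology.
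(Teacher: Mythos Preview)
Your argument has a genuine gap at step (i): the claim that $p^{-1}(\overline D)$ is compact (equivalently, that $p$ is proper over a compact disc in $\Sigma$) is false in general. The leaves of a wandering component are closed \emph{in $F$}, but they need not be compact; hence $p:F\to\Sigma$ need not be a proper map, and $\overline{p^{-1}(D)}^{M}$ can meet $\partial F$. The Kleinian suspension discussed in the introduction already exhibits this: there the fibres of $p$ are copies of the universal cover of the base surface, and for a small disc $D\subset\Omega(\Gamma)/\Gamma$ the set $p^{-1}(D)$ restricted to a $\mathbb{CP}^1$-fibre is $\Gamma\cdot\widetilde D$, which accumulates on the whole limit set $\Lambda(\Gamma)$. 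So $\overline{p^{-1}(D)}^{M}$ contains the entire Julia set, and the inclusion $\overline W\subset F$ you need simply fails. Neither ``local triviality away from the punctures'' nor ``closedness of leaves plus compactness of $M$'' yields properness of $p$.

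The paper's proof repairs exactly this point by working \emph{locally} in a foliated chart $U\cong A\times B$ around $\gamma(t_0)$. The saturated tube $V_r=p^{-1}(W_r)$ may well accumulate on $\partial F$, but its trace on $U$ decomposes as a disjoint union $\coprod_i A\times V_i$ of plaque-bundles, each $V_i$ a closed transversal disc consisting of Fatou values. Because $\gamma([t_2,t_0[)$ is connected and contained in $V_r\cap U$, it is trapped in a \emph{single} component $A\times V_i$; taking the limit $t\to t_0$ then forces the transversal coordinate of $\gamma(t_0)$ to lie in the closed disc $V_i\subset\text{Fatou}$, contradicting $\gamma(t_0)\in\partial F$. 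The connectedness step is the missing idea in your approach: it is what lets one pass from the (non-compact) full preimage $p^{-1}(W_r)$ to a single local slab whose closure genuinely stays inside $F$.
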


\begin{proof}
Suppose on the contrary that $\lim\limits_{t\rightarrow t_0} p \circ \gamma(t)=z_0 \in \Sigma$. Let $U$ be a foliated chart defined in a neighborhood of $\gamma(t_0)$: the open set $U$ is then identified with $A\times B$ where $A$ is an open set in $\mathbb{R}^d$ and $B$ is an open set in $\mathbb{C}$ such that the plaques of the foliation are the sets $A\times\{z\}$. There exists $t_1<t_0$ such that for all $t\geq t_1$, we have $\gamma(t) \in U$. Let $x_0$ be a point in  $p^{-1}(\{z_0\})$ and let $X$ be a basic normal vector field such that $X(x_0)\neq 0$. Let $\widetilde{X}$ and $\tilde{Y}$ in $H^0(M,C_{\mathcal{F}}^{\epsilon\log\epsilon}(TM))$ lifting $X$ and $iX$. and consider the map:
\[\begin{array}{ccccc}
\phi & : & \mathbb{C}\times M & \to & M \\
& & (te^{i\theta},x) & \mapsto & \phi(te^{i\theta},x) \\
\end{array}\]
which associates to $(te^{i\theta},x)$ the point $\phi(te^{i\theta},x)$ of $M$ obtained by integrating the vector field $\tilde{X_{\theta}}:=\tilde{X}\cos{\theta}+\tilde{Y}\sin{\theta}$ at time $t$.
Denoting $L_{x_0}$ the leaf through $x_0$, we have two cases:
\begin{itemize}
\item  $L_{x_0}\cap \overline{U}\neq\varnothing$. Then denoting $V_r=\phi(\overline{D(0,r)} \times L_{x_0})$ and using the previous identification of $U$ and $A \times B$, we have for $r$ small enough: 

\[V_r \cap U =\coprod_{i \in I} A \times V_i\]
where the $V_i$ are closed topological discs pairwise disjoints. Remark that the $\coprod_{i \in I} A \times V_i$ does not meet the Julia set. If $W_r=p(V_r)$, then $W_r$ is a neighborhood of $z_0$. So, there exists $t_2>t_1$ such that for all $ t \in [t_2,t_0[$, we have $p \circ \gamma(t) \in W_r$. So, for $t \in [t_2,t_0[$, $\gamma(t) \in p^{-1}(W_r)\cap U = V_r \cap U =\coprod_{i \in I} A \times V_i$. So, there is $i\in I$ such that for all $t \in [t_2,t_0[$, we have $\gamma(t) \in A \times V_i$. So $\gamma(t_0)\in\overline{A\times V_i}=\overline{A}\times V_i\subset F$ which contradicts the fact that $\gamma(t_0)$ belongs to the Julia set.

\item $L_{x_0}\cap \overline{U}=\varnothing$. Using the same notations as in the previous case, one gets that for $r$ small enough, $V_r\cap U=\varnothing$. So, for $t \in [t_2,t_0[$, $\gamma(t) \in p^{-1}(W_r)\cap U = V_r \cap U=\varnothing$, which is absurd.
\end{itemize}

\end{proof}

\begin{cor}
Almost surely $\lim\limits_{t \to T}\sigma(t)=\infty$
\end{cor}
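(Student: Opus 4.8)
The plan is to deduce this corollary from the preceding lemma by a pathwise contradiction, carried out on a set of full $\mathbb{P}_{x_0}$-measure. First I would unpack the standing hypothesis $\nu(\partial F)=1$: for a Brownian motion $(B_t)_{t\ge 0}$ starting at $x_0\in F$ it says exactly that almost surely $T<\infty$ and $B_T\in\partial F$; and since the sample paths are continuous and $F$ is open, on this event one has $T=\inf\{t\ge 0\ :\ B_t\in\partial F\}$. Hence almost every Brownian path $\gamma(\cdot)=B_\cdot(\omega)$ meets the hypotheses of the lemma (continuous on $[0,+\infty[$, $\gamma(0)\in F$, hitting $\partial F$ in finite time $t_0=T$), so the lemma tells us that, almost surely, $p(B_t)$ has \emph{no} limit as $t\to T$.

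Next I would argue by contradiction. Suppose the event $E:=\{\lim_{t\to T}\sigma(t)<\infty\}$ has positive probability, and on $E$ set $S:=\lim_{t\to T}\sigma(t)<\infty$. By Step~2 the map $p$ is Brownian path preserving, so there is a genuine Brownian motion $(B'_s)_{s\ge 0}$ on $(\Sigma,h)$ starting at $p(x_0)$ with $p(B_t)=B'_{\sigma(t)}$ for all $t\in[0,T[$. Since $\sigma$ is continuous and strictly increasing on $[0,T[$ we have $\sigma(t)\uparrow S$ as $t\to T$, and since the sample paths of $(B'_s)$ are continuous on all of $[0,+\infty[$, in particular at $s=S$, we get
$$\lim_{t\to T}p(B_t)=\lim_{t\to T}B'_{\sigma(t)}=B'_S\in\Sigma .$$
Thus on the positive-probability event $E$ the path $p(B_\cdot)$ \emph{does} have a limit as $t\to T$, contradicting the conclusion of the lemma (valid for almost every path, and $E$ is contained, up to a null set, in the set of paths to which the lemma applies). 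Therefore $\mathbb{P}_{x_0}(E)=0$, i.e. $\lim_{t\to T}\sigma(t)=\infty$ almost surely.

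I do not expect a genuine obstacle here — the argument is essentially bookkeeping once the lemma is in hand. The one point requiring care is that the lemma be legitimately applicable to Brownian sample paths, and this is precisely where the hypothesis $\nu(\partial F)=1$ enters: without it one cannot even assert $T<\infty$, and the statement would be empty. It is also worth remarking that no separate input on completeness or non-explosion of the Brownian motion on $\Sigma$ is needed: the definition of ``Brownian path preserving'' already hands us $(B'_s)$ as a Brownian motion indexed by \emph{all} $s\ge 0$, hence with continuous sample paths up to and including the time $S$, which is all the argument uses.
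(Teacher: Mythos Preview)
Your argument is correct and follows essentially the same route as the paper: apply the preceding lemma to Brownian sample paths to see that $p(B_t)$ has no limit as $t\to T$, then use the relation $p(B_t)=B'_{\sigma(t)}$ and continuity of $(B'_s)$ to rule out $\lim_{t\to T}\sigma(t)<\infty$. Your write-up is in fact more careful than the paper's in making explicit where the hypothesis $\nu(\partial F)=1$ (hence $T<\infty$ a.s.) is used to put the Brownian path in the scope of the lemma.
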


\begin{proof}
Almost surely, 
\[\begin{array}{ccccc}
B & : & [0,T[ & \to & M \\
& & t & \mapsto & B_t \\
\end{array}\]
is continuous. So, according to the previous lemma, almost surely $p\circ B_t$ does not have limit when $t$ tends to $T$. As $p$ is Brownian path preserving, there is a brownian motion $B'_s$ in $\Sigma$ such that for all $t\in[0,T[$, we have: $p \circ B_t=B'_{\sigma(t)}$. So, almost surely, $B'_{\sigma(t)}$ does not have limit when $t$ goes to $T$. So, $\lim\limits_{t \to T}\sigma(t)=\infty$
\end{proof}

\paragraph{Step $3$: Conclude.}
Denote $\nu_{x_0}$ the harmonic measure:  $\nu_{x_0}(A)=\mathbb{P}_{x_0}(B_T \in A)$ for any Borel set $A$ in $\partial F$. Recall that we assumed that $\nu_{x_0}(\partial F)=1$. We have to prove that for $\nu_{x_0}$-almost every $y \in \partial F$, for all $x \in F$, $y \in \overline{L_x}$.

As $\Sigma$ is a Riemann surface of finite type, $B_s':=(p(B_{\sigma^{-1}(s)}))_{0\leq s \leq \sigma(T)=+\infty}$ is recurrent in $\Sigma$ \cite{Gr}. Take $x \in F$, a neighborhood $U_x$ of $x$, a point $y \in supp(\nu_{x_0})$ and a neighborhood $V_y$ of $y$ in $M$. We are going to prove that $sat(U_x)\cap V_y \neq \varnothing$. As $y$ belongs to the support of $\nu_{x_0}$, we have $\nu_{x_0}(V_y)\neq 0$. Denote $A=\{\omega\in\Omega_{x_0}\ \text{ s.t. } \ B_{T(\omega)}(\omega) \in V_y\}$, we have $\mathbb{P}_{x_0}(A)=\nu_{x_0}(V_y)>0$. And so, for all $\omega \in A $, $B_t(\omega) \in V_y$ for $t$ close enough to $T(\omega)$. As $B'_s$ is recurrent in $\Sigma$, for almost every $\omega \in \Omega_{x_0}$, there is a sequence $s_n$ tending to infinity such that $B'_{s_n}(\omega) \in p(U_x)$. So, for almost every $\omega \in \Omega_{x_0}$, there is a sequence $t_n$ converging to $T(\omega)$ such that $B_{t_n}(\omega) \in p^{-1}(p(U_x))=sat(U_x)$. So, for almost every $\omega \in A$ (i.e. in a set with strictly positive probability), there is a sequence $t_n$ converging to $T(\omega)$ such that $B_{t_n}(\omega) \in sat(U_x)\cap V_y$. So, $sat(U_x)\cap V_y \neq \varnothing$.

Why does this imply that $y \in \overline{L_x}$? We have just proven that for any neighborhood $U_x$ of $x$ and any neighborhood $V_y$ of $y$, we have: $sat(U_x)\cap V_y \neq \varnothing$. So, there is a sequence $x_n$ with $x_n \rightarrow x$ and a sequence $y_n \in L_{x_n}$ with $y_n \rightarrow y$. Let $X_n$ be a sequence of basic normal vector fields in $H^0(M,C_{\mathcal{F}}(\nu^{1,0}))$ with $X_n(x_n)\neq 0$ and $\tilde{X_n}$ in $H^0(M,C_{\mathcal{F}}^{\epsilon\log\epsilon}(TM))$ lifting $X_n$ and satisfying $\Phi_n(1,x_n)=x$ where $\Phi_n: \mathbb{R} \times M \longrightarrow M$ is the flow associated to the vector field $\tilde{X_n}$. As $y_n\rightarrow y \in Julia$, $\tilde{X_n}(y_n)\rightarrow 0$. So $\Phi_n(1,y_n) \rightarrow y$. As $\Phi_n(1,.):M\longrightarrow M$ preserves the foliation, we have that $\Phi_n(1,y_n) \in L_x$. So, we have proved that $y \in \overline{L_x}$.



\end{document}